\documentclass{amsart}
\usepackage{amssymb} 
\newtheorem{thm}{Theorem}[section]
\newtheorem{cor}[thm]{Corollary}

\theoremstyle{question}

\numberwithin{equation}{section} 

\begin{document}
\author[Abdollahi and Khosravi]{A. Abdollahi \& H. Khosravi}
\title[Right $4$-Engel elements]
{Right $4$-Engel elements of a group}
\address{Department of Mathematics, University of Isfahan, Isfahan 81746-73441, Iran}%
\email{a.abdollahi@math.ui.ac.ir \;\; alireza\_abdollahi@yahoo.com}
\email{hassan\_khosravy@yahoo.com}
\subjclass[2000]{20D45}
\keywords{Right $4$-Engel elements of a group; $4$-Engel groups}
\thanks{}
\begin{abstract}
 We prove that the set of right $4$-Engel elements of a group $G$  is a subgroup for locally nilpotent groups $G$ without
elements of orders $2$, $3$ or $5$; and in this case the normal
closure $\langle x \rangle^G$ is nilpotent of class at most $7$
for each right $4$-Engel elements $x$ of $G$.
\end{abstract}
\maketitle
\section{\bf Introduction and Results}\label{sec1}
 Let $G$ be any group and
$n$  a non-negative integer. For any two
 elements $a$ and $b$ of $G$, we define
inductively $[a,_n b]$ the $n$-Engel commutator of the pair
$(a,b)$, as follows:
$$[a,_0 b]:=a,~ [a,b]=[a,_1 b]:=a^{-1}b^{-1}ab \mbox{ and } [a,_n
b]=[[a,_{n-1} b],b]\mbox{ for all }n>0.$$  An element $x$ of $G$
is called right  $n$-Engel if $[x,_ng]=1$ for all $g\in G$. We denote by $R_n(G)$  the set of all right  $n$-Engel elements of
$G$.  A group $G$ is called $n$-Engel if  $G=R_n(G)$. It is clear that $R_1(G)=Z(G)$ is the center of $G$ and  W.P. Kappe
\cite{kappew}  proved $R_2(G)$ is a characteristic
subgroup of $G$.  Macdonald \cite{macd} has shown
that the inverse or square of a right 3-Engel element need not be
right 3-Engel. Nickel \cite{nick2} generalized Macdonald's result to all $n
\geq 3$. Although Macdonald's example shows that $R_3(G)$ is not
in general a subgroup of $G$, Heineken \cite{hein} has already shown that
if $A$ is the subset of a group $G$ consisting of all elements $a$
such that $a^{\pm 1}\in R_3(G)$, then $A$ is a subgroup if either
$G$ has no element of order $2$ or $A$ consists only of elements
having finite odd order. Newell \cite{newell} proved that the
normal closure of every right $3$-Engel element is nilpotent of
class at most $3$. In Section 2 we prove that if $G$ is a
$2'$-group, then $R_3(G)$ is a subgroup of $G$. Nickel's example
shows that the set of right 4-Engel elements is not a subgroup in
general (see also the first example in Section 4 of \cite{ab1}). In
Section 3, we prove that if $G$ is a locally nilpotent
$\{2,3,5\}'$-group, then $R_4(G)$ is a subgroup of $G$.

Traustason \cite{traus} proved that any locally nilpotent 4-Engel
group $H$ is Fitting of degree at most $4$. This means that the
normal closure of every element of $H$ is nilpotent of class at
most 4. More precisely he proved that if $H$ has no element of
order $2$ or $5$, then $H$ has Fitting degree at most $3$. Now by a result of Havas and Vaughan-Lee \cite{havas}, one knows  any 4-Engel group is  locally nilpotent and so Traustason's  result is true for all $4$-Engel groups. In Section 3, by another result of Traustason \cite{traus2} we show that the normal closure of every right $4$-Engel element in a locally nilpotent
$\{2,3,5\}'$-group, is nilpotent of class at most $7$.

 Throughout the paper we have frequently used {\sf nq}
package of Werner Nickel \cite{Nick3} which is implemented in {\sf GAP} \cite{gap}.
 All given  timings  were obtained on an Intel
Pentium 4-1.70GHz processor with 512 MB running Red Hat Enterprise Linux 5.
\section{\bf Right 3-Engel elements}\label{se2}
Throughout, for any positive integer $k$ and any group $H$,  $\gamma_k(H)$ denotes the $k$th term of the lower central series of $H$.
The main result of  this section implies that  $R_3(G)$ is a subgroup of $G$ whenever $G$ is a $2'$-group.
Newell \cite{newell} proved that
\begin{thm}\label{th1}
Let $G=\langle a,b,c\rangle$ be a group such that $a,b\in R_3(G)$.
Then
\begin{enumerate}
    \item $\langle a,c\rangle$ is nilpotent of class at most $5$ and
    $\gamma_5(\langle a,c\rangle)$ has exponent $2$.
    \item $G$ is nilpotent of class at most $6$.
    \item $\gamma_5(G)/\gamma_6(G)$ has exponent $10$.
    Furthermore $[a,c,b,c,c]^2\in \gamma_6(G)$.
    \item $\gamma_6(G)$ has exponent $2$.
\end{enumerate}
\end{thm}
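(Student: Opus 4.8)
The plan is to reduce everything to a computation in a relatively free nilpotent group and to extract the exponent information from the linearized Engel relations. First I would record the multilinear consequences of the hypothesis $a,b\in R_3(G)$. Writing the defining identity $[a,_3 g]=1$ and substituting $g=x_1x_2x_3$, then collecting the part that is multilinear in $x_1,x_2,x_3$, yields a symmetrized relation of the form $\prod_{\sigma\in S_3}[a,x_{\sigma(1)},x_{\sigma(2)},x_{\sigma(3)}]\equiv 1$ modulo commutators of weight at least $6$; substituting $g=x_1x_2$ gives the partially linearized relations in weights $(2,1)$ and $(1,2)$, and $[a,_3 g]=1$ with $g$ a single generator supplies the diagonal relations. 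The same identities hold with $b$ in place of $a$. These relations, together with the Hall--Witt identity, are the only input beyond ordinary commutator calculus.

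Next I would pass to the free group $F=\langle a,b,c\rangle$ and let $N$ be the normal closure of all values $[a,_3 g]$ and $[b,_3 g]$ for $g\in F$. The content of parts (2) and (4) is that $\gamma_7(F/N)=1$, so that $F/N$ is nilpotent of class at most $6$, and that $\gamma_6(F/N)$ is an elementary abelian $2$-group. I would work in the nilpotent quotient $F/N\gamma_8(F)$ and run the collection process to obtain a consistent polycyclic presentation of its lower central factors; this is precisely the computation that Nickel's \textsf{nq} package carries out, since in a quotient of bounded class the Engel conditions on $a$ and $b$ reduce to finitely many relations by multilinearity. The structural claims then amount to reading off that the weight-$7$ layer is annihilated by the Engel relations and that the weight-$6$ layer has exponent $2$.

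For part (3) I would isolate the weight-$5$ layer $\gamma_5(F/N)/\gamma_6(F/N)$. The linearized Engel relations put each basic weight-$5$ commutator into a normal form, and tracking the integer coefficients shows that the surviving torsion splits into a $2$-part and a $5$-part, giving exponent $10$; the element $[a,c,b,c,c]$ is the one whose square accounts for the surviving $2$-torsion, which yields $[a,c,b,c,c]^2\in\gamma_6(G)$. Part (1) is the specialization to the two-generator subgroup $\langle a,c\rangle$, in which only the single Engel element $a$ is present: here the weight-$6$ commutators already vanish, so the class drops to at most $5$, and the top layer $\gamma_5(\langle a,c\rangle)$ inherits exponent $2$ from the same $2$-torsion computation.

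The main obstacle I expect is the exponent bookkeeping in parts (3) and (4). Ordinary commutator collection readily gives nilpotency of bounded class, but pinning down that the torsion in the top two layers is exactly $2$ and exactly $10$ (rather than a proper multiple, or something larger) requires keeping precise track of the integer coefficients produced by the partial linearizations of $[a,_3 g]$ and $[b,_3 g]$, and checking that no further relation collapses these orders. Establishing the presentation is consistent, so that the computed orders are genuine and not artifacts of an incomplete relator set, is exactly the delicate point, and it is the part most reliably confirmed by the nilpotent quotient algorithm.
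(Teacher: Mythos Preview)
The paper does not prove this theorem; it is quoted from Newell \cite{newell} and used as a black box in the proof of Theorem~\ref{th0}. There is therefore no argument in the present paper to compare your proposal against.

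That said, your plan has a genuine gap at the most important step, part~(2). Running the collection process in $F/N\gamma_8(F)$, or equivalently invoking \textsf{nq}, only computes the largest \emph{nilpotent} quotient of $F/N$: finding that this quotient has class $6$ tells you $\gamma_7(F)\subseteq N\gamma_8(F)$, and by the standard bootstrap $\gamma_7(F)\subseteq\bigcap_k N\gamma_k(F)$, but not that $\gamma_7(F)\subseteq N$. The last containment is exactly what is required to conclude that an arbitrary $G=\langle a,b,c\rangle$ with $a,b\in R_3(G)$ is itself nilpotent, rather than merely that every nilpotent image of such a $G$ has class at most $6$. The authors of the paper are sensitive to this distinction: in Section~\ref{se3} they explicitly \emph{hypothesize} nilpotency of $\langle a,b,x\rangle$ before feeding it to \textsf{nq}, and in the second proof of Theorem~\ref{th0} they invoke Newell's theorem precisely to guarantee nilpotency before computing in the quotient. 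Newell's original argument obtains nilpotency by direct commutator calculus, showing that each weight-$7$ basic commutator is expressible in terms of the Engel relators themselves; your outline would need either to reproduce that calculation or to supply an independent reason why $F/N$ is residually nilpotent. The same issue affects part~(1). Once nilpotency is in hand, the exponent claims in (1), (3), and (4) can indeed be read off from the \textsf{nq} output, and your description of that portion is accurate.
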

\begin{thm}\label{th0}
Let $G$ be a group such that $\gamma_5(G)$ has no element of order $2$. Then $R_3(G)$ is a subgroup of $G$.
\end{thm}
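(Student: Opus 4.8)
The plan is to show that $R_3(G)$ is closed under taking inverses and under multiplication; since $1\in R_3(G)$, this makes it a subgroup. Both closure conditions are \emph{local}: the relation $[x,_3g]=1$ lives in $\langle x,g\rangle$, so to decide whether $x^{-1}$ or a product $ab$ lies in $R_3(G)$ it suffices to examine, for each $g\in G$, the finitely generated subgroup generated by $g$ together with the relevant elements of $R_3(G)$. The point of the hypothesis is that it only restricts $\gamma_5(G)$ rather than all of $G$, so the argument will refine Heineken's theorem \cite{hein} by feeding the $2$-torsion-freeness of $\gamma_5(G)$ into the exponent information supplied by Theorem \ref{th1}. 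Throughout I will repeatedly use that $\gamma_k(H)\le\gamma_k(G)$ whenever $H\le G$, so that every $\gamma_k$ of a finitely generated subgroup is $2$-torsion-free once it sits inside $\gamma_5(G)$.

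For closure under inverses, fix $a\in R_3(G)$ and $g\in G$, and put $H=\langle a,g\rangle$. Taking $b=a$ in Theorem \ref{th1}(1) shows that $H$ is nilpotent of class at most $5$ and that $\gamma_5(H)$ has exponent $2$. But $\gamma_5(H)\le\gamma_5(G)$ has no element of order $2$, so $\gamma_5(H)=1$ and $H$ is nilpotent of class at most $4$. It then remains to verify, by commutator calculus in the free nilpotent-of-class-$4$ group on $\{a,g\}$ subject only to the right $3$-Engel relation $[a,_3g]=1$, that $[a^{-1},_3g]=1$; this gives $a^{-1}\in R_3(G)$.

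For closure under products, fix $a,b\in R_3(G)$ and $g\in G$, and put $K=\langle a,b,g\rangle$. By Theorem \ref{th1}(2), $K$ is nilpotent of class at most $6$; by Theorem \ref{th1}(4), $\gamma_6(K)$ has exponent $2$, and since $\gamma_6(K)\le\gamma_5(G)$ is $2$-torsion-free we get $\gamma_6(K)=1$, so $K$ has class at most $5$. Now Theorem \ref{th1}(3) (applied with the third generator $c=g$) tells us that $\gamma_5(K)=\gamma_5(K)/\gamma_6(K)$ has exponent $10$; being abelian and $2$-torsion-free inside $\gamma_5(G)$, it in fact has exponent dividing $5$. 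Moreover $[a,g,b,g,g]^2\in\gamma_6(K)=1$, and as $[a,g,b,g,g]\in\gamma_5(K)$ is $2$-torsion-free this forces the key relation $[a,g,b,g,g]=1$. With $K$ reduced to class $5$, with $\gamma_5(K)$ of exponent $5$, and with this extra relation in hand, one expands $[ab,_3g]$ via the identities $[ab,g]=[a,g][a,g,b][b,g]$ and its iterates, cancels the terms coming from $[a,_3g]=[b,_3g]=1$, and checks that the remaining weight-$5$ cross terms vanish; this yields $ab\in R_3(G)$.

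The main obstacle is the two explicit commutator verifications — that $[a^{-1},_3g]=1$ in the class-$4$ case and that $[ab,_3g]=1$ in the class-$5$ case — since these require collecting a substantial number of higher commutators and using the precise exponent and torsion data above; I expect to discharge them by commutator collection, with the nilpotent quotient computations carried out using the \textsf{nq} package \cite{Nick3} in \textsf{GAP} \cite{gap}. The delicate structural input, distinguishing this result from \cite{hein}, is that $2$-torsion-freeness is needed \emph{only} on $\gamma_5(G)$: it is exactly strong enough to collapse $\gamma_5(H)$ for the inverse and $\gamma_6(K)$ for the product, and then to sharpen the residual exponent-$10$ information to exponent $5$, which is what makes the final collections close up.
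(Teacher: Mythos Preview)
Your proposal is correct and follows essentially the same route as the paper: reduce $\langle a,g\rangle$ to class~$4$ and $\langle a,b,g\rangle$ to class~$5$ via Theorem~\ref{th1} and the $2$-torsion hypothesis on $\gamma_5(G)$, extract the key cross relation $[a,g,b,g,g]=1$, and then expand $[a^{-1},_3g]$ and $[ab,_3g]$. The paper carries out the $[ab,_3g]$ expansion explicitly by hand (and in fact the exponent-$5$ information on $\gamma_5(K)$ you note is never needed---the single relation $[a,g,b,g,g]=1$ together with class~$5$ suffices), and it also supplies the \textsf{nq}/\textsf{GAP} verification you anticipate as a second proof.
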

\begin{proof}
Let $a,b\in R_3(G)$ and let $c$ be an arbitrary element of $G$.
Thus
\begin{enumerate}
\item[(1)] $[a,c,c,c]=1$.
\item[(2)] $[b,c,c,c]=1$.
\end{enumerate}
Since by our assumption  $\gamma_5(G)$ has no element of order $2$, it follows from Theorem \ref{th1} parts (1), (3) and  (4), respectively that
\begin{enumerate}
\item[(3)] the subgroup $\langle a,c\rangle$
is nilpotent of class at most $4$.
\item[(4)] $[a,c,b,c,c]=1$.
\item[(5)]  the subgroup $\langle a,b,c \rangle$ is nilpotent of
class at most $5$.
\end{enumerate}
To prove $R_3(G)$ is a subgroup, we have to show that both $a^{-1}$ and $ab$ belong to $R_3(G)$.
We first prove  that $a^{-1}\in R_3(G)$. It easily follows from (1) and (3) that:
$$[a^{-1},c,c,c]=[a,c,c,c]^{-1}=1.$$
Therefore  $a^{-1}\in R_3(G)$.\\
We now show that $ab\in R_3(G)$.
\begin{eqnarray}
[ab,c,c,c]&=&\big[[a,c][a,c,b][b,c],c,c\big]\nonumber\\
&=& \big[ [a,c,c]^{[a,c,b][b,c]}\big[[a,c,b][b,c],c\big],c\big] \nonumber\\
&=& \big[ [a,c,c]^{[b,c]}[a,c,b,c]^{[b,c]}[b,c,c], c\big] \nonumber \;\; {\rm by} \;\; (5)\\
&=& \big[ [a,c,c]\big[a,c,c,[b,c]\big][a,c,b,c][b,c,c],c \big] \nonumber \;\; {\rm by} \;\; (5) \\
&=& \big[ [a,c,b,c][b,c,c],c \big] \nonumber \;\; {\rm by} \;\; (1) \;{\rm and}\; (5) \\
&=&  [a,c,b,c,c]^{[b,c,c]} \nonumber \;\; {\rm by} \;\; (2)\\
&=& 1           \nonumber \;\; {\rm by} \;\; (4)
\end{eqnarray}
This completes the proof.
\end{proof}
Now we give a proof of Theorem \ref{th0} by using {\sf nq}
package of Werner Nickel \cite{Nick3} which is implemented in {\sf GAP} \cite{gap}. Note that  the knowledge of   Theorem \ref{th1} is crucial in the following proof. The package  {\sf nq} has the capability of computing the largest nilpotent quotient (if it exists) of a finitely generated group with finitely many identical relations and finitely many relations. For example, if we want to construct the largest nilpotent quotient of a group $G$ as follows
$$\langle x_1,\dots,x_n \;|\; r_1(x_1,\dots,x_n)=\dots=r_m(x_1,\dots,x_n)=1, w(x_1,\dots,x_n,y_1,\dots,y_k)=1\rangle,$$
where $r_1,\dots,r_m$ are relations on $x_1,\dots,x_n$ and $w(x_1,\dots,x_n,y_1,\dots,y_k)=1$ is an identical relation  in the group $\langle x_1,\dots,x_n\rangle$, one may apply the following  code to use the package {\sf nq} in {\sf GAP}:
\begin{verbatim}
LoadPackage("nq"); #nq package of Werner Nickel #
F:=FreeGroup(n+k);
L:=F/[r1(F.1,...,F.n),...,rm(F.1,...,F.n),w(F.1,...,F.n,F.(n+1),...,F.(n+k))];
H:=NilpotentQuotient(L,[F.(n+1),...,F.(n+k)]);
\end{verbatim}
Note that we need to construct the free group of rank $n+k$ because  as well as the $n$ generators for $G$ we also have an
identical relation with $k$ free variables. \\
 Note that the function {\sf NilpotentQuotient(L)} attempts to compute the largest nilpotent quotient of {\sf L} and it will terminate only if {\sf L} has a largest nilpotent quotient. \\

\noindent{\bf Second Proof of Theorem \ref{th0}.} By Theorem
\ref{th1}, we know that $\langle x,y,z\rangle$ is nilpotent if
$x,y\in R_3(G)$ and $z\in G$. We now construct the largest
nilpotent group $H=\langle a,b,c \rangle$ such that $a,b\in
R_3(H)$ and $c\in H$, by {\sf nq} package.
\begin{verbatim}
 LoadPackage("nq");
 F:=FreeGroup(4);a1:=F.1; b1:=F.2; c1:=F.3; x:=F.4;
 L:=F/[LeftNormedComm([a1,x,x,x]),LeftNormedComm([b1,x,x,x])];
 H:=NilpotentQuotient(L,[x]);
 a:=H.1; b:=H.2; c:=H.3;  d:=LeftNormedComm([a^{-1},c,c,c]);
 e:=LeftNormedComm([a*b,c,c,c]);  Order(d); Order(e);
 C:=LowerCentralSeries(H);  d in C[5]; e in C[5];
\end{verbatim}
Then if we consider the elements $d=[a^{-1},c,c,c]$ and
$e=[ab,c,c,c]$ of $H$, we can see by above command in {\sf GAP} that $d$ and $e$ are elements of $\gamma_5(H)$ and have orders $2$ and
$4$, respectively. So, in the group $G$, we have   $d=e=1$.
This completes the proof. $\hfill \Box$

Note that, the second proof of Theorem \ref{th0} also shows the necessity of assuming  that $\gamma_5(G)$ has no element of order $2$.
\section{\bf Right 4-Engel elements}\label{se3}
Our main result in this section is to prove the following.
\begin{thm}\label{th2}
Let $G$ be a  $\{2,3,5\}'$-group such that $\langle a,b,x\rangle$ is nilpotent for all $a,b\in R_4(G)$ and any $x\in G$. Then $R_4(G)$
is a subgroup of $G$.
\end{thm}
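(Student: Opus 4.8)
The plan is to mirror the second proof of Theorem~\ref{th0}: reduce the subgroup property to two explicit commutator evaluations and then settle these by a single nilpotent-quotient computation. Fix $a,b\in R_4(G)$ and an arbitrary $c\in G$, so that $[a,c,c,c,c]=1=[b,c,c,c,c]$. To show that $R_4(G)$ is a subgroup it suffices to show it is closed under inversion and multiplication, that is, $a^{-1}\in R_4(G)$ and $ab\in R_4(G)$; equivalently, writing $d:=[a^{-1},c,c,c,c]$ and $e:=[ab,c,c,c,c]$, it suffices to prove $d=e=1$ for every such $c$. Both $d$ and $e$ lie in $K:=\langle a,b,c\rangle$, which is nilpotent by hypothesis, so the entire question unfolds inside a nilpotent group.

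Next I would pass to the universal model for this configuration. Let $L$ be the group on generators $a,b,c,x$ subject to the two identical relations $[a,x,x,x,x]=1$ and $[b,x,x,x,x]=1$ in the free variable $x$, and let $H$ be its largest nilpotent quotient, computed with Nickel's {\sf nq} package exactly as in the second proof of Theorem~\ref{th0}, but with the $3$-Engel words $[a,x,x,x]$ and $[b,x,x,x]$ replaced by their $4$-Engel analogues. In $H$ the images of $a$ and $b$ are right $4$-Engel, so any nilpotent $K=\langle a,b,c\rangle$ generated by two right $4$-Engel elements and one arbitrary element is a quotient of $H$. Consequently the orders of $d$ and $e$ in $K$ divide their orders in $H$, and it suffices to understand the latter.

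The decisive step is to read off, from the {\sf nq} output, the orders of $d$ and $e$ in $H$. I expect these to be finite with all prime divisors in $\{2,3,5\}$, just as the $3$-Engel computation returned the orders $2$ and $4$. Granting this, the order of the image of each of $d,e$ in $G$ divides a $\{2,3,5\}$-number; but $G$, being a $\{2,3,5\}'$-group, has no element of order divisible by $2$, $3$ or $5$, so both images are trivial. Hence $d=e=1$ for every $c\in G$, whence $a^{-1},ab\in R_4(G)$ and $R_4(G)$ is a subgroup.

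The main obstacle is concentrated entirely in the {\sf nq} computation. First, $L$ must actually possess a largest nilpotent quotient, so that the package terminates and $H$ is genuinely the universal object through which every admissible $K$ factors; for $4$-Engel words this is not clear a priori and must be confirmed by the computation — the timings reported in the Introduction indicate that it does terminate. Second, and more delicately, $d$ and $e$ must be torsion elements of $H$ whose orders involve only the primes $2$, $3$, $5$; were either of infinite order, or of order divisible by a prime outside $\{2,3,5\}$, both the argument and the theorem as stated would fail, which is exactly what singles out the $\{2,3,5\}'$ hypothesis. A hand computation along the lines of the first proof of Theorem~\ref{th0} is conceivable, but it would require expanding the commutator $[ab,c,c,c,c]$ and, to control the resulting terms, a $4$-Engel analogue of Newell's Theorem~\ref{th1} carrying explicit exponent information on the relevant lower central factors; this is considerably harder than the $3$-Engel case, so I would rely on the machine computation.
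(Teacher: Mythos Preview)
Your overall strategy---reduce to showing that $d=[a^{-1},c,c,c,c]$ and $e=[ab,c,c,c,c]$ have finite $\{2,3,5\}$-order in a universal nilpotent model, then invoke the $\{2,3,5\}'$ hypothesis---is the paper's. The execution, however, differs in a way that matters.

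You propose one computation: the \emph{full} largest nilpotent quotient $H$ of the three-generator group $L$ with two right $4$-Engel generators. The paper never performs this, and your remark that ``the timings reported in the Introduction indicate that it does terminate'' is mistaken; no such timing appears there. The $22.7$-hour figure inside the proof is for the class-$8$ quotient of $L$, not the full quotient, and the paper nowhere asserts that the unbounded {\sf nq} run on $L$ halts. So your plan rests on an assumption the paper neither claims nor needs.

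Instead the paper splits the work. For inversion it drops to the \emph{two}-generator model (one right $4$-Engel generator, one arbitrary), whose largest nilpotent quotient $M$ does exist and has class $8$; there $[a^{-1},c,c,c,c]$ has order $375=3\cdot 5^3$. For the product it first argues, by induction on the nilpotency class of $K=\langle a,b,c\rangle$, that one may assume $K$ has class at most $8$; the class-$8$ quotient of $L$ is then computed and found to have $\gamma_8$ of exponent $60$, so $\gamma_8(K)=1$ in a $\{2,3,5\}'$-group and $K$ in fact has class at most $7$. Only then is the class-$7$ quotient $S$ computed, where $[st,g,g,g,g]$ turns out to have order $300$.

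The induction-to-bounded-class device is the idea you are missing: it replaces a possibly non-terminating {\sf nq} call by bounded-class computations that always terminate, and as a by-product it delivers the class-$7$ bound exploited in Corollary~\ref{co1}. Your direct route would be valid only if the full nilpotent quotient of $L$ exists and is computable in practice---precisely the point the paper's argument is designed to sidestep.
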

\begin{proof}
Consider the `freest' group, denoted by $U$, generated by two elements $u$,$v$
with $u$ a right 4-Engel element. We mean this by the group $U$
given by the presentation
$$\langle u,v \;|\; [u,_4 x]=1 \;\;\text{for all words}\;\; x \in F_2\rangle,$$
where $F_2$ is the free group generated by $u$ and $v$.
We do not know whether $U$ is nilpotent or not.
Using the {\sf nq} package shows that the group
$U$ has a largest nilpotent quotient $M$ with
class $8$.
By the following code, the
group $M$ generated by a right $4$-Engel element $a$ and an
arbitrary element $c$ is constructed.
We then see that the element $[a^{-1},c,c,c,c]$ of $M$
is of order  $375=3\times 5^3$.  Therefore the inverse of a right
$4$-Engel element of $G$ is again a right $4$-Engel element. The
following code in {\sf GAP} gives a proof of the latter claim. The computation
was completed in about 24 seconds.
\begin{verbatim}
 F:=FreeGroup(3); a1:=F.1; b1:=F.2; x:=F.3;
 U:=F/[LeftNormedComm([a1,x,x,x,x])];
 M:=NilpotentQuotient(U,[x]);
 a:=M.1; c:=M.2;
 h:=LeftNormedComm([a^-1,c,c,c,c]);
 Order(h);
\end{verbatim}
We now show that the product of every two
right 4-Engel elements in $G$
is a right 4-Engel element. Let $a,b\in R_4(G)$ and $c\in G$. Then
we claim that
$$H=\langle a,b,c\rangle \;\; \text{is nilpotent of class at most}\; 7. \;\;\;(*)$$ By induction on the nilpotency class of $H$, we may assume that $H$ is
nilpotent of class at most 8. Now we construct the largest nilpotent group
$K=\langle a_1,b_1,c_1\rangle$ of class 8 such that $a_1,b_1\in R_4(K)$.
\begin{verbatim}
 F:=FreeGroup(4);A:=F.1; B:=F.2; C:=F.3; x:=F.4;
 W:=F/[LeftNormedComm([A,x,x,x,x]),LeftNormedComm([B,x,x,x,x])];
 K:=NilpotentQuotient(W,[x],8);
 LowerCentralSeries(K);
\end{verbatim}
The computation took about 22.7 hours. We see that $\gamma_8(K)$
has exponent $60$. Therefore, as $H$ is a  $\{2,3,5\}'$-group, we have
$\gamma_8(H)=1$ and this completes the proof of our  claim $(*)$. \\
Therefore we have proved that any nilpotent group without elements of orders $2$, $3$ or $5$ which is generated by three elements two of which are right $4$-Engel, is nilpotent of class at most $7$.\\
Now we construct, by the {\sf nq} package, the largest nilpotent group $S$ of class $7$ generated by two right $4$-Engel elements $s,t$ and an arbitrary element $g$.  Then one can find  by {\sf GAP} that the order of   $[st,g,g,g,g]$ in $S$  is
  300. Since $H$ is a  quotient of $S$, we have that $[ab,c,c,c,c]$ is of order dividing $300$ and so it is trivial, since $H$ is a $\{2,3,5\}'$-group.
 This completes the proof.
\end{proof}
\begin{cor}\label{co1}
Let $G$ be a  $\{2,3,5\}'$-group such that $\langle a,b,x\rangle$ is nilpotent for all $a,b\in R_4(G)$ and for any $x\in G$. Then $R_4(G)$ is a nilpotent group of class at most $7$. In particular,  the normal closure of every right $4$-Engel element of group $G$ is nilpotent
of class at most $7$.
\end{cor}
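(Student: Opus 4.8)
The plan is to derive the Corollary from Theorem~\ref{th2} together with the local nilpotence of $4$-Engel groups. Write $N:=R_4(G)$. First I would record that, by Theorem~\ref{th2}, $N$ is a subgroup, and that it is normal in $G$: if $x$ is right $4$-Engel then for every $h\in G$ we have $[x^h,_4g]=[x,_4g^{h^{-1}}]^h=1$ for all $g\in G$, so every conjugate of a right $4$-Engel element is again right $4$-Engel. Next I would observe that $N$ is itself a $4$-Engel group, because for $u,v\in N$ the element $u$ is right $4$-Engel in $G$ and $v\in N\subseteq G$, whence $[u,_4v]=1$. By the theorem of Havas and Vaughan-Lee \cite{havas} that every $4$-Engel group is locally nilpotent, $N$ is locally nilpotent.

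The second step is a local-to-global reduction. Since ``nilpotent of class at most $7$'' means precisely $\gamma_8(N)=1$, and since every element of $\gamma_8(N)$ already lies in $\gamma_8(H)$ for a suitable finitely generated subgroup $H\le N$ (take $H$ generated by the finitely many elements occurring in a given product of weight-$8$ commutators and their conjugators), it suffices to show that every finitely generated $H\le N$ is nilpotent of class at most $7$. Local nilpotence makes each such $H$ nilpotent, so only the class bound is at issue. The base case is supplied directly by the proof of Theorem~\ref{th2}: applying the claim $(*)$ established there to any $a,b,c\in N\subseteq G$ shows that every $3$-generated subgroup of $N$ is nilpotent of class at most $7$.

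The crux --- and the step I expect to be the main obstacle --- is to upgrade this $3$-generator bound to finitely generated subgroups on any number of generators. This cannot come from multilinearity of $\gamma_8/\gamma_9$ alone, since a weight-$8$ commutator in eight distinct variables is not a verbal consequence of weight-$8$ commutators in three variables. Instead I would exploit the $4$-Engel identity through its linearization: in the associated graded Lie ring the law $[y,_4x]=1$ polarizes to the multilinear relation $\sum_{\sigma\in S_4}[y,x_{\sigma(1)},x_{\sigma(2)},x_{\sigma(3)},x_{\sigma(4)}]=0$, and exploiting such relations --- together with the invertibility of $2,3,5$ provided by the $\{2,3,5\}'$ hypothesis --- one can hope to rewrite any top-weight commutator in the right $4$-Engel generators of $H$ as a combination of commutators in at most three distinct generators. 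This is exactly the kind of structural reduction furnished by Traustason's analysis of $4$-Engel groups \cite{traus2} (the input quoted for normal closures in the Introduction). Feeding such a reduction into the three-generator bound $(*)$ would force $\gamma_8(H)=1$ for every finitely generated $H$, and hence $\gamma_8(N)=1$; thus $N=R_4(G)$ is nilpotent of class at most $7$.

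Finally, the ``in particular'' clause is immediate: for any right $4$-Engel element $x$ of $G$ the normal closure $\langle x\rangle^G$ is contained in the normal subgroup $N=R_4(G)$, and a subgroup of a group of nilpotency class at most $7$ again has class at most $7$.
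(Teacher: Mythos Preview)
Your argument lands in the right place and uses the same ingredients as the paper, but you have made the middle far harder than it needs to be. The paper's proof is three lines: by Theorem~\ref{th2}, $R_4(G)$ is a subgroup, hence a $4$-Engel $\{2,3,5\}'$-group; Traustason's theorem in \cite{traus2} asserts outright that every locally nilpotent $4$-Engel $\{2,3,5\}'$-group is nilpotent of class at most $7$; and the normal-closure statement follows since $R_4(G)$ is a normal subset. Your steps establishing that $N$ is normal, $4$-Engel, and (via Havas--Vaughan-Lee) locally nilpotent are exactly what is needed to feed into that black box.

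Where you diverge is in your ``crux'' paragraph. You treat \cite{traus2} as supplying only a structural reduction of weight-$8$ commutators to three variables, to be combined with the $3$-generator bound $(*)$ from the proof of Theorem~\ref{th2}. This mischaracterises the cited result and is the source of your hedged language (``one can hope''). In fact \cite{traus2} already gives the global class bound, so the entire detour through $(*)$, the local-to-global reduction, and the linearised Engel identity is unnecessary. There is no genuine gap --- the reference you cite does close the argument --- but you should recognise that it closes it directly, not via the mechanism you describe.
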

\begin{proof}
By  Theorem \ref{th2}, $R_4(G)$ is a subgroup of $G$ and so it
is a 4-Engel group. In \cite{traus2} it is shown that every
locally nilpotent 4-Engel $\{2,3,5\}'$-group is nilpotent of class at most 7.
Therefore $R_4(G)$ is nilpotent of class at most 7. Since $R_4(G)$ is a normal set, the second part follows easily.
\end{proof}
Therefore, to prove that the normal closure of any right $4$-Engel element of a $\{2,3,5\}'$-group $G$ is nilpotent, it is enough to show that
$\langle a,b,x\rangle$ is nilpotent for all $a,b\in R_4(G)$ and for any $x\in G$.
\begin{cor}
In any $\{2,3,5\}'$-group, the normal closure of any right $4$-Engel element is nilpotent if and only if every $3$-generator subgroup in which two of the generators can be chosen to be  right $4$-Engel, is nilpotent.
\end{cor}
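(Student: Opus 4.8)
The statement is an equivalence, so I would treat the two implications separately. The implication that nilpotency of every such $3$-generator subgroup forces the normal closure of each right $4$-Engel element to be nilpotent is exactly the content of Corollary \ref{co1} (indeed its ``in particular'' clause), so nothing new is needed in that direction. The whole task is the converse: assuming that $\langle g\rangle^G$ is nilpotent for every $g\in R_4(G)$, I must show that $\langle a,b,x\rangle$ is nilpotent whenever $a,b\in R_4(G)$ and $x\in G$.

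For the converse, the plan is to first assemble a large nilpotent normal subgroup and then control the action of $x$ on it. First I would invoke Fitting's theorem: since $\langle a\rangle^G$ and $\langle b\rangle^G$ are nilpotent normal subgroups of $G$, their product $N:=\langle a\rangle^G\langle b\rangle^G=\langle a,b\rangle^G$ is again a nilpotent normal subgroup. Writing $H=\langle a,b,x\rangle$ and $A=N\cap H$, the subgroup $A$ is nilpotent (being a subgroup of $N$) and normal in $H$, and since $a,b\in A$ we get $H=A\langle x\rangle$ with $H/A$ cyclic. At this point $H$ is only known to be nilpotent-by-cyclic, which is not enough on its own (for instance, the infinite dihedral group), so the Engel hypothesis has to be used to pin down how $x$ acts on $A$.

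The key step is to show that $x$ induces a unipotent (nilpotent) automorphism of $N$, that is, $[N,_k x]=1$ for some $k$. Here I would use that $R_4(G)$ is a normal subset of $G$: every $G$-conjugate of $a$ or of $b$ is again right $4$-Engel, and these conjugates generate $N$. Consequently each generator $c$ of $N$ satisfies $[c,_4 x]=1$, which on the abelianization $N/\gamma_2(N)$ says that conjugation by $x$ acts as an additive operator $\phi$ with $(\phi-1)^4=0$ on a generating set, hence on all of $N/\gamma_2(N)$. Since the lower central factors $\gamma_i(N)/\gamma_{i+1}(N)$ are generated by images of iterated commutators, $\phi$ acts on each of them as a quotient of the $i$-th tensor power of its action on $N/\gamma_2(N)$; as a tensor product of unipotent operators is again unipotent, $\phi$ is unipotent on every factor, and because $N$ is nilpotent there are only finitely many such factors. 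Collecting these bounds yields $[N,_k x]=1$ for a suitable $k$, and therefore $[A,_k x]=1$ as well.

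Finally, $H=A\langle x\rangle$ is the product of a nilpotent normal subgroup $A$ on which the cyclic group $\langle x\rangle$ acts unipotently; such a group is nilpotent, since the unipotent action together with the nilpotency of $A$ produces a finite central series of $H$. This gives $\langle a,b,x\rangle$ nilpotent and completes the converse. The main obstacle is exactly the upgrade from the $4$-Engel relation on the generators of $N$ to a uniform bound $[N,_k x]=1$ on all of $N$: Fitting's theorem alone only produces a nilpotent-by-cyclic group, and it is the Engel condition, propagated through the lower central series of $N$, that rules out a non-nilpotent extension and forces $H$ to be nilpotent.
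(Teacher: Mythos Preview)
Your argument is correct, but the converse direction proceeds along genuinely different lines from the paper. The paper also begins with Fitting's theorem, applied inside $H=\langle a,b,x\rangle$ to $K=\langle a\rangle^{H}\langle b\rangle^{H}$, and then exploits the right $4$-Engel condition in the concrete form
\[
\langle a\rangle^{\langle x\rangle}=\langle a,a^{x},a^{x^{2}},a^{x^{3}}\rangle,\qquad
\langle b\rangle^{\langle x\rangle}=\langle b,b^{x},b^{x^{2}},b^{x^{3}}\rangle,
\]
to conclude that $K$ is \emph{finitely generated} nilpotent. Thus $H$ is (finitely generated nilpotent)-by-cyclic and satisfies the maximal condition, so Baer's theorem \cite{Baer} places $a,b\in\zeta_{m}(H)$ for some $m$; since $H/\zeta_{m}(H)$ is then cyclic, $H$ is nilpotent. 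Your route avoids both the finite-generation step and the appeal to Baer: you instead show directly that conjugation by $x$ is unipotent on $N$, by propagating $(\phi-1)^{4}=0$ from a generating set of $N/\gamma_{2}(N)$ through tensor powers to every lower central factor of $N$, and then refine the lower central series of $A$ (on whose factors $A$ itself acts trivially) into a central series of $H$. The paper's proof is shorter and rests on a classical black box; yours is more self-contained and makes explicit why the Engel relation on generators controls all of $N$, at the cost of the tensor-power bookkeeping.
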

\begin{proof}
By Corollary \ref{co1}, it is enough to show that  a $\{2,3,5\}'$-group $H=\langle a,b,x\rangle$ is nilpotent whenever $a,b\in R_4(H)$, $x\in H$ and both $\langle a\rangle^H$ and $\langle b\rangle ^H$ are nilpotent. Consider the subgroup $K=\langle a\rangle ^H\langle b\rangle^H$ which is nilpotent by Fitting's theorem. Now we prove that $K$ is finitely generated. We have $K=\langle a,b\rangle^{\langle x\rangle}$ and since $a$ and $b$ are both right $4$-Engel, it is well-known that
$$\langle a\rangle^{\langle x\rangle}=\langle a,a^x,a^{x^2},a^{x^3}\rangle \;\;\text{and}\;\; \langle b\rangle^{\langle x\rangle}=\langle b,b^x,b^{x^2},b^{x^3}\rangle,$$
and so $$K=\langle a,a^x,a^{x^2},a^{x^3},b,b^x,b^{x^2},b^{x^3} \rangle.$$
It follows that $H$  satisfies maximal condition on its subgroups as it is (finitely generated nilpotent)-by-cyclic. Now by a famous result of Baer \cite{Baer} we have that $a$ and $b$ lie in the $(m+1)$th term $\zeta_m(H)$ of the upper central series of $H$ for some positive integer $m$. Hence $H/\zeta_m(H)$ is cyclic and so $H$ is nilpotent. This completes the proof.
\end{proof}
We conclude this section with the following interesting information on the group $M$ in the proof of Theorem \ref{th2}.
In fact, for the largest nilpotent group $M=\langle a,b\rangle$ relative to $a\in R_4(M)$, we have that $M/T$ is isomorphic to the largest (nilpotent) $2$-generated $4$-Engel group $E(2,4)$, where $T$ is the torsion subgroup of $M$ which is a $\{2,3,5\}$-group. Therefore, in a nilpotent $\{2,3,5\}'$-group, a right $4$-Engel element with an arbitrary element generate a $4$-Engel group. This can be seen by  comparing the presentations of $M/T$ and $E(2,4)$ as follows. One can obtain two finitely presented groups {\sf G1} and {\sf G2} isomorphic to $M/T$ and $E(2,4)$, respectively by {\sf GAP}:
\begin{verbatim}
MoverT:=FactorGroup(M,TorsionSubgroup(M));
E24:=NilpotentEngelQuotient(FreeGroup(2),4);
iso1:=IsomorphismFpGroup(MoverT);iso2:=IsomorphismFpGroup(E24);
G1:=Image(iso1);G2:=Image(iso2);
\end{verbatim}
Next, we find  the relators  of the groups {\sf G1} and {\sf G2} which are two sets of relators on 13 generators by the following command in {\sf GAP}.
\begin{verbatim}
r1:=RelatorsOfFpGroup(G1);r2:=RelatorsOfFpGroup(G2);
\end{verbatim}
Now, save these two sets of relators  by {\sf LogTo} command of {\sf GAP} in a file and go to the file to delete the terms as
\begin{verbatim}
<identity ...>
\end{verbatim}
in the sets {\sf r1} and {\sf r2}. Now call these two modified sets {\sf R1} and {\sf R2}. We show that {\sf R1=R2} as two sets of elements of the  free group {\sf f}  on 13 generators {\sf f1,f2,...,f13}.
\begin{verbatim}
f:=FreeGroup(13);
f1:=f.1;f2:=f.2;f3:=f.3;f4:=f.4;f5:=f.5;f6:=f.6;
f7:=f.7;f8:=f.8;f9:=f.9;f10:=f.11;f12:=f.12;f13:=f.13;
\end{verbatim}
Now by {\sf Read} function, load the file in {\sf GAP} and type the  simple command
{\sf R1=R2}. This gives us {\sf true} which shows $G_1$ and $G_2$ are two finitely presented groups with the same relators and generators and so they are isomorphic. We do not know if there is a guarantee that if someone else does as we did, then he/she finds the same relators for {\sf Fp} groups {\sf G1} and {\sf G2}, as we have found. Also we remark that using function {\sf IsomorphismGroups} to test if $G_1\cong G_2$, did not give us a result in less than 10 hours and we do not know whether this function can give us a result or not.

We summarize the above discussion as following.
\begin{thm}
Let $G$ be a nilpotent group generated by two elements, one of which is  a right $4$-Engel element. If $G$ has no element of order $2$, $3$ or $5$, then $G$ is a $4$-Engel group of class at most $6$.
\end{thm}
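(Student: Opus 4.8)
The plan is to exhibit $G$ as a homomorphic image of the group $M/T$ appearing in the discussion above, and then to read off both assertions from the established isomorphism $M/T \cong E(2,4)$.

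First I would construct the covering map. Write $G = \langle a, b \rangle$ with $a \in R_4(G)$, and let $U = \langle u, v \;|\; [u,_4 x] = 1 \text{ for all } x \in F_2 \rangle$ be the freest group on two generators whose first generator is right $4$-Engel. Sending $u \mapsto a$ and $v \mapsto b$ defines an epimorphism $\phi \colon U \to G$: each defining relation $[u,_4 x] = 1$ is carried to $[a,_4 \phi(x)] = 1$, which holds because $a \in R_4(G)$. Since $G$ is nilpotent and $M$ is the largest nilpotent quotient of $U$, the map $\phi$ factors through $M$, yielding an epimorphism $\bar\phi \colon M \to G$.

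Next I would kill the torsion. Let $T$ be the torsion subgroup of $M$, which by the discussion above is a $\{2,3,5\}$-group. Its image $\bar\phi(T)$ is then a torsion $\{2,3,5\}$-subgroup of $G$; but $G$ has no element of order $2$, $3$ or $5$, so $\bar\phi(T) = 1$ and $\bar\phi$ factors through $M/T$. Using $M/T \cong E(2,4)$, I conclude that $G$ is a quotient of the $2$-generated $4$-Engel group $E(2,4)$. As the $4$-Engel identity $[x,_4 y] = 1$ is inherited by quotients, $G$ is a $4$-Engel group.

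For the class bound I would invoke that $M/T \cong E(2,4)$ has nilpotency class $6$ --- equivalently that $\gamma_7(M) \subseteq T$, which can be read off from the computed class-$8$ structure of $M$ whose torsion subgroup $T$ absorbs everything above the sixth lower central term. Being an epimorphic image of $M/T$, the group $G$ then has class at most $6$.

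The genuine content is entirely contained in the two machine-verified facts already in hand: that $U$ possesses a largest nilpotent quotient $M$, of class $8$, and that factoring out its torsion produces $E(2,4)$. Granting these, the argument is a short diagram chase; the only delicate point is confirming that $T$ carries precisely the $\{2,3,5\}$-torsion and that $\gamma_7(M) \le T$, so that the nilpotency class drops exactly to $6$ rather than merely to $7$.
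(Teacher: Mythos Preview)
Your argument is correct and follows essentially the same route as the paper: the theorem is stated in the paper merely as a summary of the preceding discussion, and that discussion amounts precisely to the chain of epimorphisms $U \twoheadrightarrow M \twoheadrightarrow M/T \cong E(2,4)$ that you spell out, together with the computational facts that $T$ is a $\{2,3,5\}$-group and that $E(2,4)$ has class~$6$. You have made explicit the factorization steps (through the largest nilpotent quotient, then modulo torsion) that the paper leaves implicit, but the substance is identical.
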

\noindent{\bf Acknowledgments.} The authors are grateful to the referee for his/her careful reading and insightful  comments.
The research of the first author is financially supported by the Center of Excellence for Mathematics, University of Isfahan.

\end{document}